\newenvironment{proof}{\noindent {\bf Proof:}}{\hfill $\Box$}
\newtheorem{theorem}{Theorem}[section]
\newtheorem{lemma}{Lemma}[section]
\newtheorem{corollary}{Corollary}[section]
\newtheorem{remark}{Remark}[section]
\title{\bf A hierarchy of LMI inner approximations of the set of stable polynomials}
\begin{document}

\author{Mustapha Ait Rami$^1$,
Didier Henrion$^{2,3}$}

\footnotetext[1]{
Departamento de Ingenier\'{\i}a de Sistemas y Autom\'atica,
Facultad de Ciencias, Universidad de Valladolid, calle Real de Burgos s/n,
SP-47011 Valladolid, Spain. {\tt aitrami@autom.uva.es}}

\footnotetext[2]{
CNRS; LAAS; 7 avenue du colonel Roche, F-31077 Toulouse, France;
Universit\'e de Toulouse; UPS, INSA, INP, ISAE; LAAS; F-31077 Toulouse, France.
{\tt henrion@laas.fr}}

\footnotetext[3]{
Faculty of Electrical Engineering, Czech Technical University in Prague, Technick\'a 4,
CZ-16626 Prague, Czech Republic.
{\tt henrion@fel.cvut.cz}}

\maketitle

\begin{abstract}
Exploiting spectral properties of symmetric banded Toeplitz matrices, we describe simple sufficient conditions for positivity of a trigonometric polynomial formulated as linear matrix inequalities (LMI) in the coefficients. As an application of these results, we derive a hierarchy of convex LMI inner approximations (affine sections of the cone of positive definite matrices of size $m$) of the nonconvex set of Schur stable polynomials of given degree $n < m$. It is shown that when $m$ tends to infinity the hierarchy converges to a lifted LMI approximation (projection of an LMI set defined in a lifted space of dimension quadratic in $n$) already studied in the technical literature.
\end{abstract}

\begin{center}
\small
{\bf Keywords}: stability; positive polynomials; LMI; Toeplitz matrices
\end{center}

\section{Introduction}

Linear system stability can be formulated algebraically in the space
of coefficients of the characteristic polynomial. The region of stability
is generally {\it nonconvex} in this space, and this is a major
obstacle when solving fixed-order or robust controller design problems.
In the case of discrete-time linear systems, the region of stability
is a bounded open set whose boundary consists of (flat) hyperplanes and
nonconvex (negatively curved) algebraic varieties. Recent results on real algebraic
geometry and generalized problems of moments
can be used to build up a hierarchy of convex linear matrix inequality (LMI)
outer approximations of the region of stability, with asymptotic
convergence to its convex hull, see e.g. \cite{csm} for a software
implementation and examples. It is generally more difficult to
construct convex LMI {\it inner approximations}, see \cite{hsk03}
for a survey. Strict positive realness of rational transfer functions
and its connection with polynomial positivity conditions are
used in \cite{hsk03} to generate inner approximations which are
{\it lifted} LMI sets. For polynomials of degree $n$, they are projections
onto coefficient space $\mathbb{R}^n$ of an LMI set living in a
lifted space $\mathbb{R}^{\frac{n^2+3n}{2}}$. The LMI set is built
around a particular point, the central polynomial, whose relevance
in robust control design is explained in \cite{hsk03}. These lifted
LMI regions are also used in signal processing, see e.g.
\cite[Section 7.3]{dumitrescu}. They can be derived in a state-space
setting with the Kalman-Yakubovich-Popov lemma \cite{karimi}.

Whereas lifted LMIs are a powerful modeling paradigm (it is currently
conjectured that every convex semialgebraic set is a lifted LMI set),
the introduction of a large number of lifting variables can be seen
as a drawback. It is therefore relevant to build convex LMI inner
approximations of the nonconvex stability region {\it without
liftings}, namely as affine sections of the cone of positive
semidefinite matrices. This is the objective of this paper.
We use results of functional analysis on sequences of eigenvalues
of Toeplitz matrices to derive sufficient LMI conditions for positivity of
trigonometric polynomials, and we apply these results to construct
a hierarchy of $m$-by-$m$ LMI inner approximations of the nonconvex
stability domain. Moreover we prove that when $m$ tends to infinity,
the hierarchy converges asymptotically to the lifted
LMI approximation of \cite{hsk03}.

\section{Trigonometric polynomials and Toeplitz matrices}

Let $p_k$, $k=0,1,2,\ldots,n$ denote real numbers, and
define the trigonometric polynomial
\[
\begin{array}{r@{\;}c@{\;}l}
z=e^{i\theta} \mapsto p(\theta) & = & p_0+p_1(z+z^{-1})+p_2(z^2+z^{-2})+
\cdots+p_n(z^n+z^{-n}) \\
& = & p_0 + 2p_1\cos\theta + 2p_2\cos2\theta+\cdots+2p_n\cos n\theta
\end{array}
\]
of degree $n$ mapping the unit disk of the complex plane onto the real axis.

For a given integer $m > n$, define the column vector
$v_m(z) = [1 \ z \ z^2 \cdots z^{m-1}]^T$ and represent polynomial $p$
as a quadratic form
\begin{equation}\label{quad}
p(\theta) = \frac{1}{m}v^T_m(e^{-i\theta})P_mv_m(e^{i\theta})
\end{equation}
where
\begin{equation}\label{pm}
P_m = \left[\begin{array}{ccccc}
p_0 & \frac{m}{m-1}p_1 & \frac{m}{m-2}p_2 \\
\frac{m}{m-1}p_1 &  p_0  & \frac{m}{m-1}p_1\\
\frac{m}{m-2}p_2  & \frac{m}{m-1}p_1 & p_0\\
& & & \ddots \\
& & & & p_0
\end{array}\right]
\end{equation}
is an $m$-by-$m$ symmetric banded Toeplitz matrix.

Define
\[
R_m = \left[\begin{array}{ccccc}
p_0 & p_1 & p_2 \\
p_1 & p_0 & p_1 \\
p_2 & p_1 & p_0 \\
& & & \ddots \\
& & & & p_0
\end{array}\right]
\]
as the $m$-by-$m$ moment matrix of $p$, so named for
\[
p_k = \frac{1}{2\pi} \int_0^{2\pi} p(\theta)e^{-ik\theta} d\theta
\]
is the $k$-th moment, or Fourier coefficient, of polynomial $p$.
Note that $R_m$ has the
same banded symmetric Toeplitz structure as $P_m$.

Connections between the spectrum of matrix $R_m$ and
the values taken by polynomial $p$ on the unit circle
have been studied extensively.
In the sequel, $\lambda_{\min}$ denotes the minimum eigenvalue
of a symmetric matrix.

\begin{theorem}\label{Szego} 
$$\lim_{m\rightarrow +\infty}\lambda_{\min}(R_m)=\min_\theta p(\theta).$$
\end{theorem}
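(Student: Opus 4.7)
The plan is to establish the result by a sandwich argument, proving separately that $\liminf_m \lambda_{\min}(R_m)\ge \min_\theta p(\theta)$ (the easy direction, in fact valid for every $m>n$) and $\limsup_m \lambda_{\min}(R_m)\le \min_\theta p(\theta)$. Both inequalities rest on the observation that for $m>n$, $R_m$ coincides with the standard Toeplitz matrix whose symbol is the trigonometric polynomial $p$, because the moment $p_{|l|}$ is precisely the $l$-th Fourier coefficient of $p$ and vanishes for $|l|>n$.

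For the lower bound I would begin by writing the quadratic form as a Fourier integral: for any complex vector $v=(v_0,\ldots,v_{m-1})^T$ with associated polynomial $V(z)=\sum_{k=0}^{m-1}v_k z^k$, a direct expansion using the definition of the Fourier coefficients $p_{|l|}$ gives
\[
v^{*} R_m v \;=\; \frac{1}{2\pi}\int_0^{2\pi} |V(e^{i\theta})|^2\, p(\theta)\, d\theta.
\]
Combining this with the pointwise bound $p(\theta)\ge \min_\theta p(\theta)$ and Parseval's identity $\tfrac{1}{2\pi}\int|V|^2 d\theta = \|v\|^2$ yields $\lambda_{\min}(R_m)\ge \min_\theta p(\theta)$ uniformly in $m>n$.

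For the matching upper bound I would test $R_m$ against the unit vector $v_m(e^{i\theta_0})/\sqrt{m}$, where $\theta_0$ realizes the minimum of $p$ and $\|v_m(e^{i\theta_0})\|^2=m$. Grouping entries of $R_m$ along diagonals ($l=k-j$, with $m-|l|$ pairs per diagonal), the resulting Rayleigh quotient expands as $p(\theta_0)$ plus a correction of order $1/m$ whose numerator is $-\tfrac{2}{m}\sum_{l=1}^n l\, p_l \cos(l\theta_0)$, a quantity depending only on the fixed coefficients $p_1,\ldots,p_n$ and hence uniformly bounded in $m$. Letting $m\to\infty$ gives $\limsup_m \lambda_{\min}(R_m)\le \min_\theta p(\theta)$, which closes the sandwich.

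The main obstacle is really the upper bound: producing a sequence of trial vectors whose Rayleigh quotients approach the minimum of the symbol. The plane wave $v_m(e^{i\theta_0})$ works so cleanly here only because $p$ has fixed degree $n$, so the Toeplitz truncation error is genuinely $O(1/m)$; for an arbitrary bounded symbol one would need a Fejér-kernel construction instead, but in the trigonometric-polynomial setting this refinement is unnecessary.
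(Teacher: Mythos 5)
Your proof is correct, but it takes a genuinely different route from the paper, whose entire proof of this theorem is a one-line citation of Szeg\H{o}'s eigenvalue distribution theorem (Corollary 4.2 of the Gray reference). You instead give a self-contained elementary argument: the identity $v^*R_mv=\frac{1}{2\pi}\int_0^{2\pi}|V(e^{i\theta})|^2p(\theta)\,d\theta$ plus Parseval gives the uniform lower bound $\lambda_{\min}(R_m)\ge\min_\theta p(\theta)$ for every $m$ (this is the standard fact that eigenvalues of a Hermitian Toeplitz matrix lie in the range of its symbol), and the plane-wave test vector $v_m(e^{i\theta_0})/\sqrt m$ at a minimizer $\theta_0$ gives the Rayleigh quotient $p(\theta_0)-\frac{2}{m}\sum_{l=1}^n l\,p_l\cos(l\theta_0)=p(\theta_0)+O(1/m)$, hence the matching upper bound. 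Both computations check out (the diagonal count $m-|l|$ is right, and using complex test vectors is harmless for a real symmetric matrix), and your observation that the fixed degree $n$ is what makes the naive plane wave suffice --- with a genuine $O(1/m)$ rate --- is accurate. What your approach buys is transparency and a quantitative convergence rate without invoking the full distribution theorem; what the paper's citation buys is brevity and a statement that would survive generalization to non-polynomial symbols, where, as you correctly note, one would need a Fej\'er-kernel localization in place of the plane wave. The only cosmetic slip is the phrase ``whose numerator is $-\tfrac{2}{m}\sum_{l=1}^n l\,p_l\cos(l\theta_0)$'': that expression already includes the factor $1/m$, so it is the correction itself, not its numerator; the mathematics is unaffected.
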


\begin{proof}
It is a corollary of G\'abor Szeg\H{o}'s fundamental eigenvalue
distribution theorem, see e.g. \cite[Corollary 4.2]{gray}.
\end{proof}

In this section we aim at establishing a similar spectral property linking
matrix $P_m$ and polynomial $p$. First let us state a few instrumental
results.

\begin{lemma}\label{lem1}
For all $\theta$ it holds $\lambda_{\min}(P_m)\leq p(\theta)$ and as a consequence 
\begin{equation}
\limsup_{m\rightarrow +\infty}\lambda_{\min}(P_m)\leq \min_\theta p(\theta). 
\end{equation}
\end{lemma}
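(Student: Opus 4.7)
The plan is to exploit the quadratic representation (\ref{quad}) together with the Rayleigh--Ritz characterization of the smallest eigenvalue.

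First, I would observe that $v_m(e^{-i\theta})$ is the complex conjugate of $v_m(e^{i\theta})$, so that the right-hand side of (\ref{quad}) is precisely $\frac{1}{m} v_m(e^{i\theta})^* P_m v_m(e^{i\theta})$, where the star denotes the conjugate transpose. Because $P_m$ is real symmetric, hence Hermitian, the Rayleigh quotient inequality
\[
\lambda_{\min}(P_m) \;\leq\; \frac{x^* P_m x}{x^* x}
\]
holds for every nonzero complex vector $x \in \mathbb{C}^m$. Applying this with $x = v_m(e^{i\theta})$ and noting that
\[
v_m(e^{i\theta})^* v_m(e^{i\theta}) \;=\; \sum_{k=0}^{m-1} |e^{ik\theta}|^2 \;=\; m,
\]
the normalization factor $1/m$ cancels exactly and yields $\lambda_{\min}(P_m) \leq p(\theta)$ for every $\theta$, which is the first claim.

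For the second claim, since the inequality $\lambda_{\min}(P_m) \leq p(\theta)$ is valid for every $\theta$ and the trigonometric polynomial $p$ is continuous on the compact set $[0,2\pi]$, minimizing over $\theta$ on the right gives $\lambda_{\min}(P_m) \leq \min_\theta p(\theta)$ for every fixed $m$. The right-hand side is independent of $m$, so taking $\limsup$ over $m$ preserves the inequality and yields the stated bound.

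There is no real obstacle here: the argument is a one-line Rayleigh quotient estimate once one notices that $\|v_m(e^{i\theta})\|^2 = m$ compensates the prefactor in (\ref{quad}). The only point worth being explicit about is that the Rayleigh quotient inequality for a real symmetric matrix extends to complex test vectors, which is immediate from the Hermitian spectral theorem.
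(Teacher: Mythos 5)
Your argument is correct and is exactly the paper's proof: both use the identity $v_m^T(e^{-i\theta})v_m(e^{i\theta})=m$ together with the Rayleigh quotient bound for the Hermitian matrix $P_m$ evaluated at the complex test vector $v_m(e^{i\theta})$, then minimize over $\theta$ and pass to the $\limsup$. Your added remark that the Rayleigh inequality for a real symmetric matrix extends to complex vectors is a useful explicit justification of a step the paper leaves implicit.
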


\begin{proof}
From relation (\ref{quad}) and the identity $v^T_m(e^{-i\theta})v_m(e^{i\theta})=m$,
it follows that
\begin{equation}\label{rayleigh}
\frac{v^T_m(e^{-i\theta})P_mv_m(e^{i\theta})}{v^T_m(e^{-i\theta})v_m(e^{i\theta})} =
p(\theta)
\end{equation}
and hence $\lambda_{\min}(P_m)\leq p(\theta)$. When $m \rightarrow \infty$
we obtain the desired result.
\end{proof}

\begin{lemma}\label{lem2}
\[
 \| P_m-R_m\| = O(m^{-\frac{1}{2}}).
\]
\end{lemma}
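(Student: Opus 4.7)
The plan is to compute $P_m - R_m$ explicitly and then bound its norm by elementary means. Comparing (\ref{pm}) with the definition of $R_m$, both matrices are symmetric Toeplitz of bandwidth $2n+1$, so their difference is as well; on the $k$-th off-diagonal the entry is $\tfrac{m}{m-k}p_k - p_k = \tfrac{k}{m-k}p_k$ for $k=1,\dots,n$, while the main diagonal is zero.

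With this explicit form in hand, I would decompose
\[
P_m - R_m \;=\; \sum_{k=1}^{n}\frac{k\,p_k}{m-k}\bigl(S_k + S_k^T\bigr),
\]
where $S_k$ is the $m\times m$ matrix with ones on the $k$-th superdiagonal and zeros elsewhere. Since $\|S_k\|=1$, the triangle inequality yields
\[
\|P_m-R_m\| \;\le\; 2\sum_{k=1}^{n}\frac{k\,|p_k|}{m-k}.
\]
With $n$ and $p_1,\dots,p_n$ fixed and $m\to\infty$, every summand is $O(1/m)$, so the whole sum is $O(1/m)$, which is in particular $O(m^{-1/2})$. An equivalent route is to use symmetry and the bound $\|A\|_2\le \|A\|_\infty$, reading off the maximum absolute row sum directly from the explicit form.

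No genuine obstacle arises: the statement reduces to a direct calculation once $P_m - R_m$ is written out. In fact one obtains the stronger rate $O(1/m)$, and the looser $O(m^{-1/2})$ stated in the lemma is presumably all that will be needed in the sequel. Combined with Weyl's inequality $|\lambda_{\min}(P_m)-\lambda_{\min}(R_m)|\le \|P_m-R_m\|$ and Theorem \ref{Szego}, Lemma \ref{lem2} will give $\liminf_{m\to\infty}\lambda_{\min}(P_m)\ge \min_\theta p(\theta)$, matching the upper bound of Lemma \ref{lem1} and producing a Szeg\H{o}-type asymptotic for $P_m$.
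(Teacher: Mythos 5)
Your argument is correct and starts from the same place as the paper's proof: write out $P_m-R_m$ explicitly as a symmetric banded Toeplitz matrix with zero diagonal and entries of size $O(1/m)$ on its $2n$ nonzero off-diagonals. The two proofs diverge only in the final estimate. The paper computes the squared Frobenius norm, summing $\Theta(m)$ entries of size $O(m^{-2})$ on each of the finitely many off-diagonals to get $\|P_m-R_m\|_F^2=O(m^{-1})$; this gives exactly the stated rate $O(m^{-1/2})$ and no better, since the Frobenius norm of this matrix is genuinely of order $m^{-1/2}$. You instead bound the spectral norm via the shift-matrix decomposition (equivalently, via row sums), obtaining the sharper rate $O(1/m)$ --- a strictly stronger conclusion, as the operator norm is dominated by the Frobenius norm. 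Either version is adequate downstream: the lemma is used in Theorem \ref{main_result} only through $|v^T(P_m-R_m)v|\le\|P_m-R_m\|$ for a unit vector $v$, which needs the operator norm (dominated by both bounds), and only the fact that the bound tends to zero is exploited. One small point in your favour: the entry on the $k$-th off-diagonal of $P_m-R_m$ is indeed $\frac{k}{m-k}p_k$ as you write, whereas the paper's displayed matrix shows $\frac{1}{m-k}p_k$ for $k\geq 2$; this constant-factor discrepancy (with $k\leq n$ fixed) does not affect the asymptotics, but your bookkeeping is the accurate one.
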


\begin{proof}
Consider
\[
P_m-R_m = \left[\begin{array}{ccccc}
0 & \frac{1}{m-1}p_1 & \frac{1}{m-2} p_2 \\
\frac{1}{m-1}p_1 & 0 & \frac{1}{m-1} p_1 \\
\frac{1}{m-2} p_2 & \frac{1}{m-1}p_1 & 0 \\
& & & \ddots \\
& & & & 0
\end{array}\right]
\]
and hence for the Froebenius norm
\[
\| P_m-R_m\|^2 = \sum_{k=1}^n \frac{m-k}{(m-k)^2}p^2_k = \sum_{k=1}^n \frac{1}{m-k} p^2_k.
\]
\end{proof}
  
We are now ready to state our main result.

\begin{theorem}\label{main_result}
\[
\lim_{m\rightarrow +\infty}\lambda_{\min}(P_m)=\min_\theta p(\theta).
\]
\end{theorem}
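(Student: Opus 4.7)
The plan is to sandwich $\lambda_{\min}(P_m)$ between two quantities that both converge to $\min_\theta p(\theta)$. Lemma \ref{lem1} already supplies the $\limsup$ direction, so the only thing left is to establish a matching lower bound
\[
\liminf_{m\rightarrow+\infty}\lambda_{\min}(P_m)\;\geq\;\min_\theta p(\theta).
\]

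For this, I would compare $P_m$ to the moment matrix $R_m$, whose asymptotic minimum eigenvalue is known by Szeg\H{o}'s theorem (Theorem \ref{Szego}). By Weyl's perturbation inequality for symmetric matrices,
\[
\lambda_{\min}(P_m)\;\geq\;\lambda_{\min}(R_m)-\|P_m-R_m\|_2,
\]
where $\|\cdot\|_2$ denotes the spectral norm. Since the spectral norm is dominated by the Frobenius norm, Lemma \ref{lem2} yields $\|P_m-R_m\|_2=O(m^{-1/2})$, which tends to $0$. Taking $\liminf$ as $m\to\infty$ and applying Theorem \ref{Szego} then gives $\liminf_m \lambda_{\min}(P_m)\geq \min_\theta p(\theta)$.

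Combining this lower bound with Lemma \ref{lem1} pinches $\lambda_{\min}(P_m)$ between two sequences converging to $\min_\theta p(\theta)$, which proves the theorem. There is no real obstacle here: everything is essentially bookkeeping once Szeg\H{o}'s theorem and the perturbation estimate of Lemma \ref{lem2} are in place. The only subtle point to mention explicitly is that the bound of Lemma \ref{lem2} is stated for the Frobenius norm whereas Weyl's inequality requires the spectral norm, so I would note the inequality $\|\cdot\|_2\leq\|\cdot\|_F$ before invoking it.
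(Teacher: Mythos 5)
Your proposal is correct and follows essentially the same route as the paper: the paper also sandwiches $\lambda_{\min}(P_m)$ using Lemma \ref{lem1} for the upper bound and obtains the lower bound $\lambda_{\min}(P_m)\geq\lambda_{\min}(R_m)-O(m^{-1/2})$ by expanding $v^TP_mv=v^T(P_m-R_m)v+v^TR_mv$ for a minimizing unit eigenvector $v$, which is exactly the one-sided Weyl perturbation bound you invoke. Your explicit remark that the Frobenius bound of Lemma \ref{lem2} dominates the spectral norm is a point the paper leaves implicit, but it is the same argument.
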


\begin{proof}
Let $v$ be an eigenvector  of $P_m$ such that  $v^Tv=1$ and $P_mv=\lambda_{\min}(P_m)v$. From 
the equality
\[
v^TP_mv = v^T(P_m-R_m)v+v^TR_mv,
\]
we obtain with the help of Lemma \ref{lem2} the following inequality
\[
\lambda_{\min}(P_m)\geq O(m^{-\frac{1}{2}})+  \lambda_{\min}(R_m).
\]
Taking the limit, we obtain
\[
\liminf_{m\rightarrow +\infty}\lambda_{\min}(P_m)\geq\lim_{m\rightarrow +\infty}\lambda_{\min}(R_m).
\]
Using Lemma \ref{lem1} and Theorem \ref{Szego}, we can see that
\[
\liminf_{m\rightarrow +\infty}\lambda_{\min}(P_m)\geq \lim_{m\rightarrow +\infty}\lambda_{\min}(R_m)=\min_{\theta} p(\theta)
\]
and hence
\[
\liminf_{m\rightarrow +\infty}\lambda_{\min}(P_m)\geq \min_\theta p(\theta)\geq \limsup_{m\rightarrow +\infty}\lambda_{\min}(P_m).
\]
If $x_m$ is a real sequence then it is well-known that if $\liminf_{m\rightarrow +\infty}x_m=\limsup_{m\rightarrow +\infty}x_m$, then the sequence $x_m$ converges to
$\lim_{m\rightarrow +\infty}x_m=\liminf_{m\rightarrow +\infty}x_m=\limsup_{m\rightarrow +\infty}x_m$,
and this completes the proof.
\end{proof}

\begin{corollary} 
Assume that polynomial $p$ is positive.
Then, there exists a sufficiently large integer $m_0$ such that for $m \geq m_0$,
the Toeplitz matrix $P_m$ is positive definite.
\end{corollary}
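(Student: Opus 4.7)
The plan is to obtain the corollary as a direct and essentially immediate consequence of Theorem \ref{main_result}, so the main work is to check that the hypothesis of positivity translates into a strictly positive lower bound on $\min_\theta p(\theta)$, and then to invoke the limit statement.

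First I would observe that the trigonometric polynomial $p$ is a continuous real-valued function of $\theta$ on the compact interval $[0,2\pi]$, so the infimum $\min_\theta p(\theta)$ is attained. Under the hypothesis that $p$ is (strictly) positive on the unit circle, this minimum is a strictly positive real number; denote it by $\mu > 0$.

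Next I would apply Theorem \ref{main_result}, which gives
\[
\lim_{m\to +\infty}\lambda_{\min}(P_m) \;=\; \min_\theta p(\theta) \;=\; \mu \;>\; 0.
\]
By the definition of the limit of a real sequence, applied for instance with tolerance $\mu/2$, there exists an integer $m_0$ such that for every $m\geq m_0$ we have $\lambda_{\min}(P_m) > \mu/2 > 0$. Since $P_m$ is real symmetric, positivity of its smallest eigenvalue is equivalent to positive definiteness, so $P_m\succ 0$ for all $m\geq m_0$, which is the desired conclusion.

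There is no serious obstacle here; the only subtlety worth flagging is the distinction between strict and non-strict positivity of $p$. If only $p(\theta)\geq 0$ were assumed, the argument would fail because $\mu$ could be zero and $P_m$ might merely be asymptotically positive semidefinite. Under the stated strict positivity hypothesis, however, the argument goes through verbatim.
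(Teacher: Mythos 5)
Your proof is correct and follows exactly the paper's route: the paper simply says ``Use Theorem \ref{main_result},'' and you have filled in the standard details (the minimum of $p$ is attained and strictly positive, so the limit of $\lambda_{\min}(P_m)$ is eventually bounded below by a positive constant, forcing $P_m\succ 0$). Your remark on the necessity of strict positivity is a sensible observation and consistent with the paper's Remark \ref{rem1}.
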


\begin{proof}
Use Theorem \ref{main_result}.
\end{proof}

\begin{remark}\label{rem1}
Note that when $p$ is positive,
matrices $P_m$ are not necessarily positive definite if $m$ is not large enough.
As a simple example consider the positive polynomial
$p(\theta)=2+2\cos\theta+\frac{8}{5}\cos2\theta$.
We have
\[
P_3=\left[\begin{array}{ccc}
2 & \frac{3}{2} &  \frac{12}{5}\\[.5em]
\frac{3}{2}   & 2  &  \frac{3}{2} \\[.5em]
\frac{12}{5} & \frac{3}{2}  & 2 
\end{array}\right]
\]
which is not positive definite, since $\lambda_{\min}(P_3)= -\frac{2}{5}$.
Also, the next Toeplitz matrix
\[
P_4=\left[\begin{array}{cccc}
2 &  \frac{4}{3}   & \frac{8}{5}    &     0\\[.5em]
\frac{4}{3} &  2  &  \frac{4}{3}  &  \frac{8}{5}\\[.5em]
 \frac{8}{5} &   \frac{4}{3}   & 2 &   \frac{4}{3}\\[.5em]
 0  &   \frac{8}{5}  &  \frac{4}{3}  &  2
\end{array}\right],
\]
is not positive definite either, since $\lambda_{\min}(P_4)=\frac{8}{13}-\frac{2\sqrt{509}}{15} \approx -0.3415$.
However, one can check that when $m\geq m_0=30$, matrices
$P_m$ are indeed positive definite.
\end{remark}

\section{LMI inner approximations of stability domain}

Consider a monic polynomial
\[
d(z)=d_0+d_1z+\cdots+d_{n-1}z^{n-1}+z^n
\]
of degree $n$, with coefficient vector $d \in {\mathbb R}^n$
and let us define the set
\[
{\mathcal S} = \{d \in {\mathbb R}^n \: :\: d(z) \:\mathrm{stable} \}
\]
where stability is meant in the discrete-time, or Schur sense, i.e. all
the roots of $d(z)$ belong to the open unit disk. Many control problems
(e.g. fixed-order or robust controller design) can be formulated as
linear programming problems in $\mathcal S$. Unfortunately $\mathcal S$
is nonconvex when $n>2$, which renders controller design difficult
in general. It can therefore be relevant to describe convex inner
approximations of $\mathcal S$, in particular by exploiting the
modeling flexibility of linear matrix inequalities (LMIs),
see \cite{hsk03} and references therein.

An approach consists in choosing a monic polynomial
\[
c(z)=c_0+c_1z+\cdots+c_{n-1}z^{n-1}+z^n
\]
which is stable. Once $c$ is given, we define the trigonometric polynomial
\[
\begin{array}{rcl}
z=e^{i\theta} \mapsto p^{c,d}(\theta) & = &
c(z^{-1})d(z)+c(z)d(z^{-1})
\\
& = & 2\displaystyle\sum_{l=0}^n\sum_{\begin{subarray}{c}j,k=0\\
|j-k|=l\end{subarray}}^nc_jd_k\cos l\theta
\end{array}
\]
and the set
\[
{\mathcal P}^c = \{d \in {\mathbb R}^n \: :\: p^{c,d}(\theta) > 0
\quad\forall\:\theta \in {\mathbb R}\}.
\]

\begin{lemma}\label{schur}
Let $c(z)$ be a given stable polynomial. Then ${\mathcal P}^c \subset {\mathcal S}$.
\end{lemma}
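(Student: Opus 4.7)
The plan is to translate the trigonometric polynomial positivity condition into a statement about the rational function $d(z)/c(z)$ on the unit circle, and then apply the argument principle.

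First I would observe that since $c$ and $d$ have real coefficients, on the unit circle $z = e^{i\theta}$ we have $z^{-1} = \overline{z}$, so $c(z^{-1}) = \overline{c(z)}$ and $d(z^{-1}) = \overline{d(z)}$. The defining expression therefore rewrites as
\[
p^{c,d}(\theta) \;=\; \overline{c(z)}\,d(z) + c(z)\,\overline{d(z)} \;=\; 2\,\mathrm{Re}\bigl(\overline{c(z)}\,d(z)\bigr).
\]
Because $c$ is stable its zeros lie in the open unit disk, hence $c(e^{i\theta})\neq 0$ for all $\theta$, and dividing by the positive quantity $|c(e^{i\theta})|^2$ yields the equivalent condition
\[
\mathrm{Re}\!\left(\frac{d(e^{i\theta})}{c(e^{i\theta})}\right) > 0 \quad \forall\,\theta\in\mathbb{R}.
\]

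Next I would invoke the argument principle for the rational function $g(z) = d(z)/c(z)$ on the unit circle. The positivity condition just derived says that the image $g(e^{i\theta})$ lies entirely in the open right half-plane, which is a simply connected set avoiding the origin. Consequently the winding number of $g(e^{i\theta})$ around $0$ as $\theta$ runs over $[0,2\pi]$ equals zero. By the argument principle this winding number equals $Z_d - Z_c$, where $Z_d$ and $Z_c$ denote the number of zeros (counted with multiplicity) of $d$ and $c$, respectively, inside the open unit disk. Note that neither $c$ nor $d$ can have a zero on the unit circle itself: for $c$ this follows from its stability, and for $d$ it would force $p^{c,d}(\theta) = 0$ at that point, contradicting strict positivity.

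Since $c$ is a stable monic polynomial of degree $n$, all of its $n$ zeros lie in the open unit disk, so $Z_c = n$. The vanishing winding number then gives $Z_d = n$, which means every zero of the degree-$n$ polynomial $d$ lies in the open unit disk. Therefore $d \in \mathcal{S}$. The only conceptually delicate point is the appeal to the argument principle for a rational function with poles inside the unit disk, but this is standard and the right half-plane image makes the winding number computation essentially immediate, so I do not expect a real obstacle.
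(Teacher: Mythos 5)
Your proof is correct and is essentially the paper's own argument: the paper phrases the condition $p^{c,d}>0$ as the angle between $c(e^{i\theta})$ and $d(e^{i\theta})$ staying below $\frac{\pi}{2}$ so that $d(e^{i\theta})$ inherits the $n$ encirclements of the origin from $c(e^{i\theta})$, which is exactly your statement that $d(z)/c(z)$ stays in the open right half-plane and hence has winding number zero. Your version via the quotient also makes explicit the strict-positive-realness reformulation that the paper records separately in its final theorem.
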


\begin{proof}
A geometric proof is as follows. Since polynomial $c(z)$ is Schur stable, 
when $z=e^{i\theta}$ varies along the unit circle, 
complex number $c(e^{i\theta})$ has a net increase of argument of $2n\pi$,
or equivalently the plot of $c(e^{i\theta})$ encircles the origin $n$ times, see
e.g. \cite[Section 1.3.3]{bck} or use Cauchy's argument principle. Notice that
the real number $p^{c,d}(\theta)=c(e^{-i\theta})d(e^{i\theta})+c(e^{i\theta})d(e^{-i\theta})$
is equal to $2|c(e^{i\theta})d(e^{i\theta})|\cos(c(e^{i\theta}),d(e^{i\theta}))$
where the last term is the cosine of the oriented angle between vectors
$c(e^{i\theta})$ and $d(e^{i\theta})$ in the complex plane. Therefore
$p^{c,d}(\theta)$ positive implies that the cosine is positive and hence that
the angle between $c(e^{i\theta})$ and $d(e^{i\theta})$
is less than $\frac{\pi}{2}$ in absolute value for any given
value of $\theta$. This means
that complex number $d(e^{i\theta})$ also encircles the
origin $n$ times when $\theta$ range from $0$ to $2\pi$, and hence
that polynomial $d(z)$ is Schur stable.
\end{proof}

Let $P^{c,d}_m$ be the symmetric banded Toeplitz matrix
corresponding to polynomial $p^{c,d}$, built as in (\ref{pm}), and define the set
\[
{\mathcal P}^c_m = \{d \in {\mathbb R}^n \: :\: P^{c,d}_m \succ 0\}
\]
where $\succ 0$ means positive definite. Note that symmetric matrix $P^{c,d}_m$ depends
affinely on $d$, so that ${\mathcal P}^c_m$ is a convex LMI set.

\begin{theorem}\label{inner}
Let $c(z)$ be a given stable polynomial of degree $n$, and let $m>n$.
Then ${\mathcal P}^c_m \subset {\mathcal S}$.
\end{theorem}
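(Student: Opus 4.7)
The plan is to obtain the inclusion as a short chain combining the previous Lemma \ref{schur} with Lemma \ref{lem1}, applied to the trigonometric polynomial $p^{c,d}$. Explicitly, I would argue that $\mathcal{P}^c_m \subset \mathcal{P}^c \subset \mathcal{S}$, where the second inclusion is exactly the content of Lemma \ref{schur}, so the only thing to check is the first inclusion.

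To establish $\mathcal{P}^c_m \subset \mathcal{P}^c$, take any $d \in \mathcal{P}^c_m$. By definition this means that the symmetric banded Toeplitz matrix $P^{c,d}_m$ is positive definite, so $\lambda_{\min}(P^{c,d}_m) > 0$. The polynomial $p^{c,d}(\theta) = c(z^{-1})d(z)+c(z)d(z^{-1})$ has degree exactly $n$ in $\cos k\theta$, and since we assume $m > n$, the construction (\ref{pm}) is well-defined and the quadratic-form representation (\ref{quad}) applies to $p^{c,d}$ and $P^{c,d}_m$. Lemma \ref{lem1} then yields $p^{c,d}(\theta) \geq \lambda_{\min}(P^{c,d}_m) > 0$ for every real $\theta$, which is exactly the condition defining membership in $\mathcal{P}^c$. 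Hence $d \in \mathcal{P}^c$, and Lemma \ref{schur} finishes the argument.

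There is essentially no obstacle here: everything needed has been proved. The only mild point to be careful about is verifying that $p^{c,d}$ really is a trigonometric polynomial of the form covered by Section 2 with degree strictly less than $m$, so that the Toeplitz construction and the Rayleigh-type inequality of Lemma \ref{lem1} are applicable. Once that is observed, the proof reduces to a one-line application of the two lemmas.
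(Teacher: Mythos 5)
Your proposal is correct and follows essentially the same route as the paper: the paper deduces positivity of $p^{c,d}(\theta)$ directly from the quadratic-form identity $mp^{c,d}(\theta)=v^T_m(e^{-i\theta})P^{c,d}_mv_m(e^{i\theta})$ and then invokes Lemma \ref{schur}, while you reach the same conclusion by citing Lemma \ref{lem1} (whose proof is precisely that Rayleigh-quotient identity). The difference is purely one of packaging, not of substance.
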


\begin{proof}
Since $mp^{c,d}(\theta)=v^T_m(e^{-i\theta})P^{c,d}_mv_m(e^{i\theta})$,
positive definiteness of matrix $P^{c,d}_m$ implies positivity of
polynomial $p^{c,d}(\theta)$. Then use Lemma \ref{schur}.
\end{proof}

Set ${\mathcal P}^c_m$ is therefore a valid convex inner approximation
of the nonconvex stability region $\mathcal S$. Its geometry depends
only on the choice of a stable polynomial $c(z)$.

\begin{theorem}\label{limit}
Let $c(z)$ be a given stable polynomial. Then
${\mathcal P}^c = \lim_{m\rightarrow +\infty} {\mathcal P}^c_m$.
\end{theorem}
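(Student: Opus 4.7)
The plan is to interpret the limit as the set identity $\mathcal{P}^c = \bigcup_{m>n}\mathcal{P}^c_m$, equivalently, $d \in \mathcal{P}^c$ if and only if $d \in \mathcal{P}^c_m$ for every sufficiently large $m$. Under this reading the statement splits into two inclusions, each of which is an immediate consequence of results already established.

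First I would note that $\mathcal{P}^c_m \subseteq \mathcal{P}^c$ holds for every $m > n$, whence $\bigcup_m \mathcal{P}^c_m \subseteq \mathcal{P}^c$. This is exactly the intermediate step in the proof of Theorem \ref{inner}: via the identity (\ref{quad}) applied to $p^{c,d}$, positive definiteness of $P^{c,d}_m$ forces $p^{c,d}(\theta) > 0$ for every $\theta$, i.e. $d \in \mathcal{P}^c$.

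For the reverse inclusion I would fix $d \in \mathcal{P}^c$, so that $\min_\theta p^{c,d}(\theta) > 0$, and apply Theorem \ref{main_result} to the trigonometric polynomial $p^{c,d}$. This yields $\lim_{m \to +\infty} \lambda_{\min}(P^{c,d}_m) = \min_\theta p^{c,d}(\theta) > 0$, so there exists $m_0$ with $P^{c,d}_m \succ 0$ for every $m \geq m_0$, i.e. $d \in \mathcal{P}^c_m$ eventually. Combined with the first inclusion this gives $\mathcal{P}^c = \bigcup_m \mathcal{P}^c_m$.

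The only delicate step, and the one I would pause to pin down, is the meaning of ``$\lim$'' at the level of sets. Remark \ref{rem1} shows that $\lambda_{\min}(P^{c,d}_m)$ need not be monotone in $m$, so the $\mathcal{P}^c_m$ need not form an increasing family and the limit cannot be read as a monotone union in the naive sense. However, the two inclusions above show that for each fixed $d$ the membership $d \in \mathcal{P}^c_m$ is eventually constant and equal to $d \in \mathcal{P}^c$; this pointwise characterization identifies the limit unambiguously with $\mathcal{P}^c$, so the precise notion of set convergence adopted (Painlev\'e--Kuratowski, pointwise limit of indicator functions, etc.) is immaterial here.
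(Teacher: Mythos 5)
Your proof is correct and takes the same route as the paper, whose entire argument is ``Use Theorem \ref{main_result}''; you simply make explicit the two inclusions and the interpretation of the set limit that the paper leaves implicit. The observation that membership of a fixed $d$ in ${\mathcal P}^c_m$ is eventually constant (using Lemma \ref{lem1} for the case $d \notin {\mathcal P}^c$, where $\lambda_{\min}(P^{c,d}_m) \leq \min_\theta p^{c,d}(\theta) \leq 0$ for \emph{every} $m$) is a worthwhile clarification given the non-monotonicity highlighted in Remark \ref{rem1}.
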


\begin{proof}
Use Theorem \ref{main_result}.
\end{proof}

Finally we make the connection with the results in \cite{hsk03}.
Recall that a discrete-time rational function is strictly
positive real (SPR) whenever its real part is strictly positive
when evaluated along the unit circle.

\begin{theorem}
\[
{\mathcal P}^c = \{d \in {\mathbb R}^n \: :\: \frac{d(z)}{c(z)}
\:\:\mathrm{SPR} \}.
\]
\end{theorem}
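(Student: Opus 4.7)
The plan is to express the real part of $d(z)/c(z)$ on the unit circle directly in terms of the trigonometric polynomial $p^{c,d}(\theta)$ and conclude that positivity of one is equivalent to positivity of the other.

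First I would fix $\theta \in \mathbb{R}$ and write $z = e^{i\theta}$, so that $\overline{c(z)} = c(z^{-1}) = c(e^{-i\theta})$ since $c$ has real coefficients. Multiplying numerator and denominator of $d(z)/c(z)$ by $c(z^{-1})$, one gets
\[
\frac{d(e^{i\theta})}{c(e^{i\theta})} = \frac{d(e^{i\theta})\,c(e^{-i\theta})}{|c(e^{i\theta})|^2}.
\]
Taking twice the real part amounts to adding the complex conjugate, which gives
\[
2\,\mathrm{Re}\!\left(\frac{d(e^{i\theta})}{c(e^{i\theta})}\right) = \frac{d(e^{i\theta})c(e^{-i\theta})+d(e^{-i\theta})c(e^{i\theta})}{|c(e^{i\theta})|^2} = \frac{p^{c,d}(\theta)}{|c(e^{i\theta})|^2},
\]
by the very definition of $p^{c,d}$.

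Second, since $c(z)$ is stable, it has no roots on the unit circle, so $|c(e^{i\theta})|^2 > 0$ for every $\theta$. Hence the sign of $\mathrm{Re}(d(e^{i\theta})/c(e^{i\theta}))$ coincides with the sign of $p^{c,d}(\theta)$ for every $\theta$. It follows that $p^{c,d}(\theta) > 0$ for all $\theta \in \mathbb{R}$ if and only if $\mathrm{Re}(d(e^{i\theta})/c(e^{i\theta})) > 0$ for all $\theta$, which is exactly the definition of $d(z)/c(z)$ being SPR. This is precisely the claimed set equality.

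There is no real obstacle here: the argument is a one-line algebraic manipulation, with the only subtlety being the use of Schur stability of $c$ to ensure $c(e^{i\theta}) \neq 0$ so that dividing by $|c(e^{i\theta})|^2$ is legitimate and preserves sign.
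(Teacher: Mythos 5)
Your proof is correct and follows essentially the same route as the paper: both compute $\mathrm{Re}\,\bigl(d(e^{i\theta})/c(e^{i\theta})\bigr)$ by adding the conjugate and clearing the denominator $|c(e^{i\theta})|^2$, which is positive because the stable polynomial $c$ has no roots on the unit circle, thereby identifying the sign of the real part with the sign of $p^{c,d}(\theta)$. No differences worth noting.
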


\begin{proof}
Since $c(z)$ is stable, the SPR inequality
\[
\mathrm{Re}\:\frac{d(e^{i\theta})}{c(e^{i\theta})} = 
\frac{1}{2}\left(\frac{d(e^{i\theta})}{c(e^{i\theta})}+
\frac{d(e^{-i\theta})}{c(e^{-i\theta})}\right) = 
\frac{c(e^{-i\theta})d(e^{i\theta})+c(e^{i\theta})d(e^{-i\theta})}
{2|c(e^{i\theta})|^2} > 0
\]
is equivalent to positivity of trigonometric polynomial $p^{c,d}(\theta)$.
\end{proof}

Polynomial $c(z)$ is referred to as a central polynomial in \cite{hsk03}
since set ${\mathcal P}^c$ is built around $c(z)$ in the coefficient
space. Note however that there is no guarantee that $c(z)$ belongs
to ${\mathcal P}^c_m$ if $m$ is not large enough, see Remark \ref{rem1}.

\section{Example}

\subsection{Second-order polynomials}

\begin{figure}[h!]
\begin{center}
\includegraphics[scale=0.8]{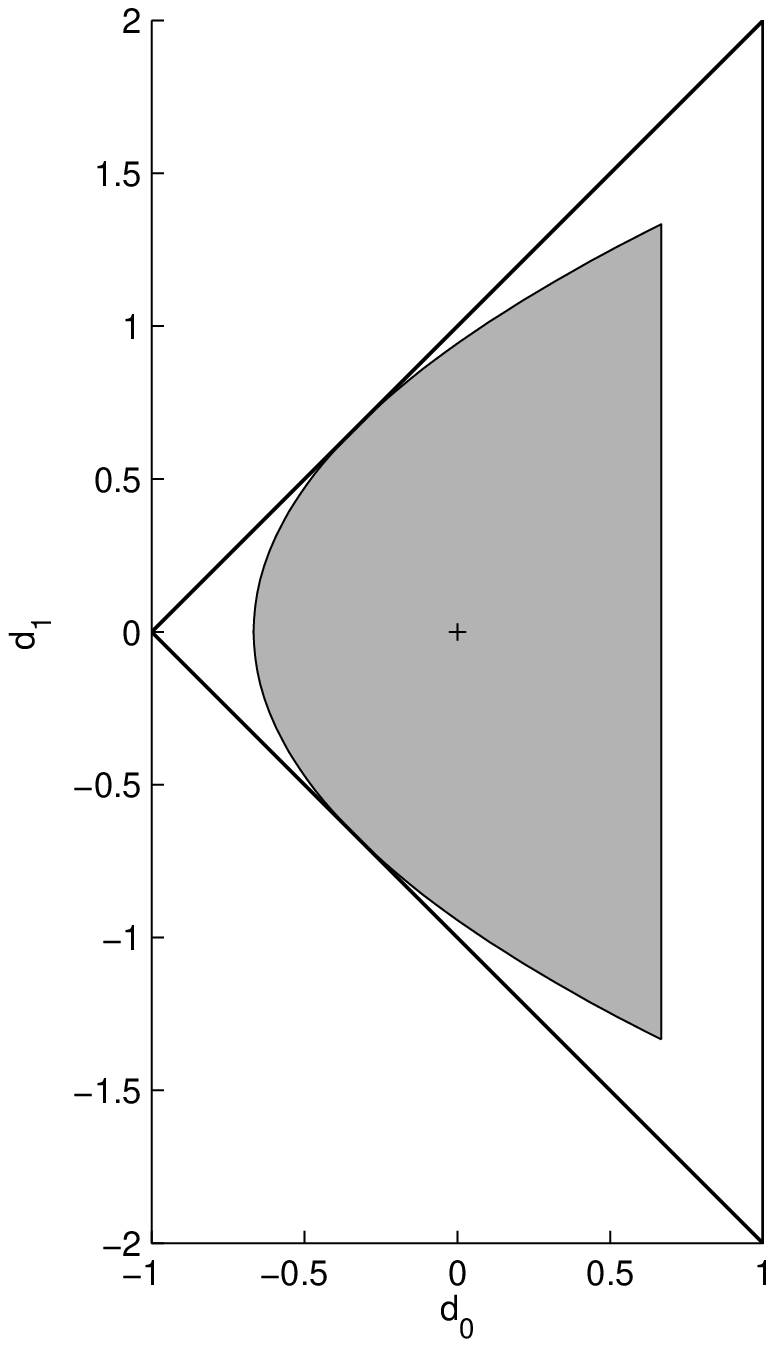}
\includegraphics[scale=0.8]{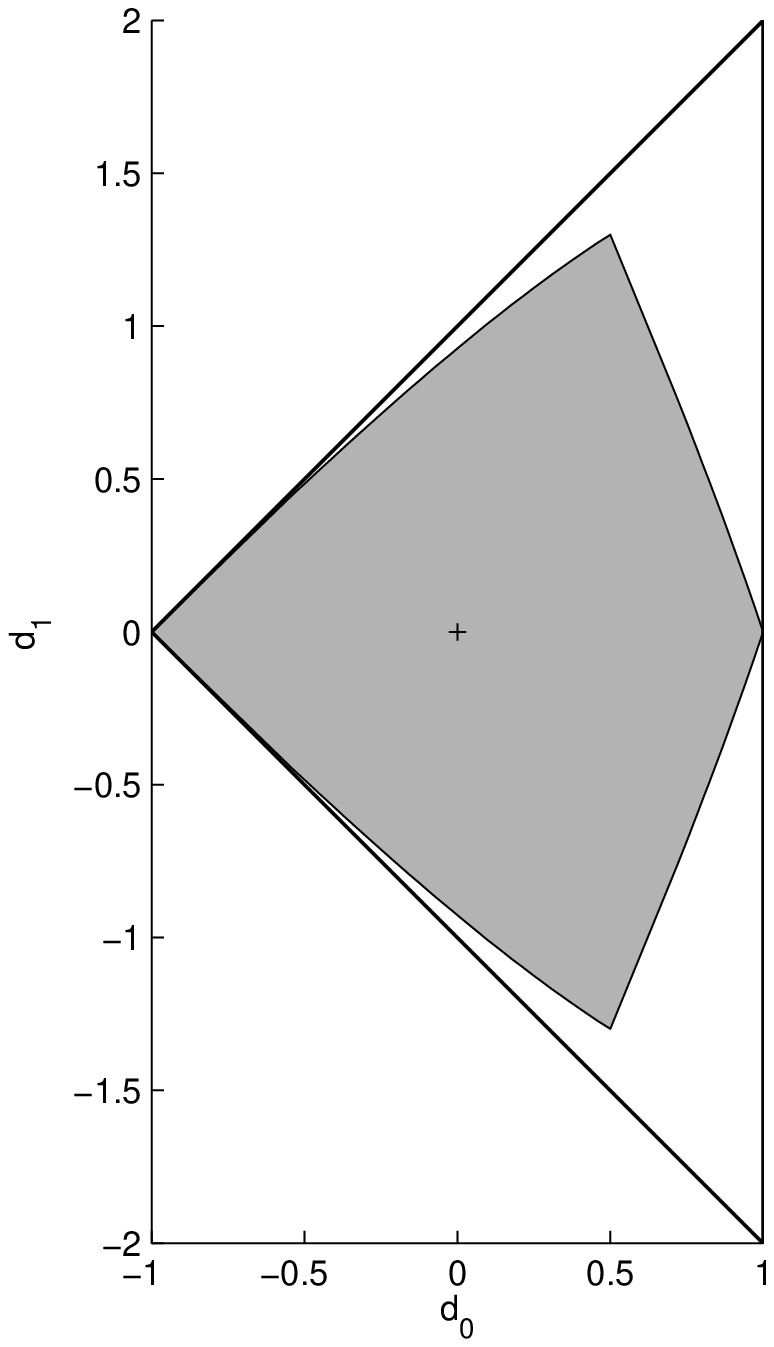}
\caption{3-by-3 LMI set (shaded gray, left)
and 4-by-4 LMI set (shaded gray, right) within second-order discrete-time
stability region (triangle).\label{lmi34}}
\end{center}
\end{figure}

We consider second-order polynomials for which the exact stability region
is a triangle with vertices $(z+1)^2$, $(z-1)(z-1)$ and $(z-1)^2$
\cite[Example 11.13]{ackermann}.

Choosing $c(z)=z^2$, we have $p^{c,d}(\theta)=2+2d_1\cos\theta+2d_0\cos2\theta$.
The first LMI inner approximation is
\[
{\mathcal P}^{c,d}_3 = \{(d_0,d_1) \: :\: P^{c,d}_3 = \left[\begin{array}{ccc}
2 & \frac{3}{2}d_1 & 3d_0 \\
\frac{3}{2}d_1 & 2 & \frac{3}{2}d_1 \\
3d_0 & \frac{3}{2}d_1 & 2
\end{array}\right] \succ 0\}
\]
and it is represented on the left of Figure \ref{lmi34} within the stability
triangle, as claimed by Theorem \ref{inner}.

The second LMI inner approximation is
\[
{\mathcal P}^{c,d}_4 = \{(d_0,d_1) \: :\: P^{c,d}_4 = \left[\begin{array}{cccc}
2 & \frac{4}{3}d_1 & 2d_0 & 0\\
\frac{4}{3}d_1 & 2 & \frac{4}{3}d_1 & 2d_0\\
2d_0 & \frac{4}{3}d_1 & 2 & \frac{4}{3}d_1 \\
0 & 2d_0 & \frac{4}{3}d_1 & 2
\end{array}\right] \succ 0\},
\]
see the right of Figure \ref{lmi34}.

\begin{figure}[h!]
\begin{center}
\includegraphics[scale=0.8]{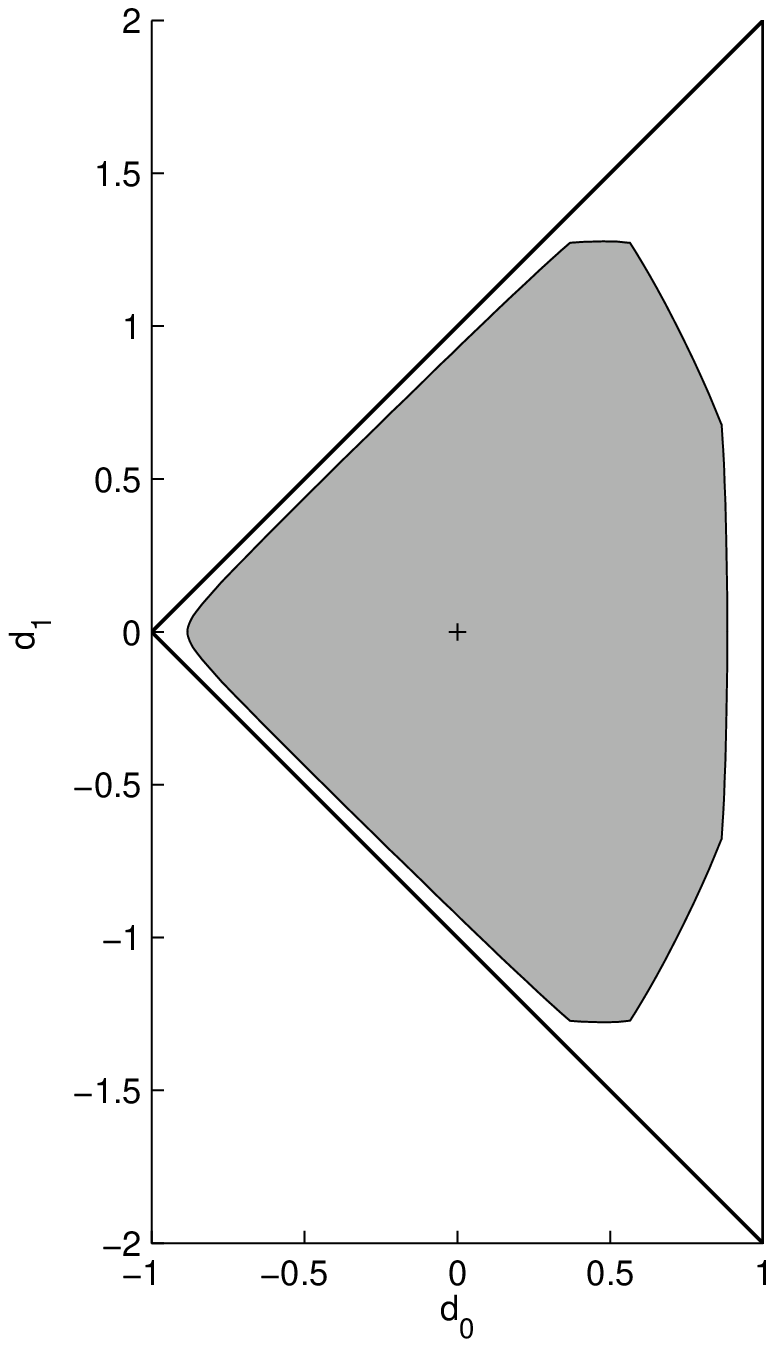}
\includegraphics[scale=0.8]{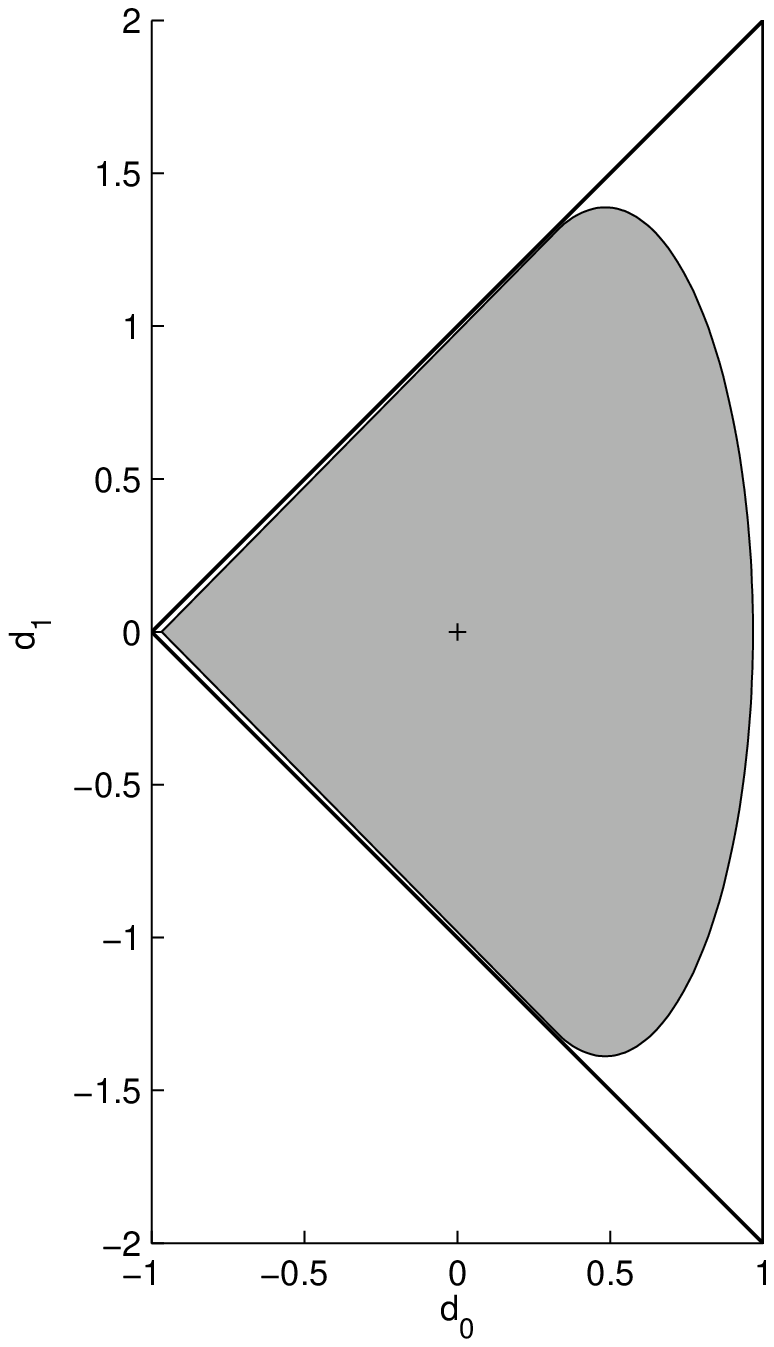}
\caption{7-by-7 LMI set (shaded gray, left)
and 50-by-50 LMI set (shaded gray, right) within second-order discrete-time
stability region (triangle).\label{lmi750}}
\end{center}
\end{figure}

On the left of Figure \ref{lmi750}
we represent the LMI set
\[
{\mathcal P}^{c,d}_7 = \{(d_0,d_1) \: :\: P^{c,d}_7 = \left[\begin{array}{ccccccc}
2 & \frac{7}{6}d_1 & \frac{7}{5}d_0 & 0 & 0 & 0 & 0\\
\frac{7}{6}d_1 & 2 & \frac{7}{6}d_1 & \frac{7}{5}d_0 & 0 & 0 & 0\\
\frac{7}{5}d_0 & \frac{7}{6}d_1 & 2 & \frac{7}{6}d_1 & \frac{7}{5}d_0 & 0 & 0 \\
0 & \frac{7}{5}d_0 & \frac{7}{6}d_1 & 2 & \frac{7}{6}d_1 & \frac{7}{5}d_0 & 0 \\
0 & 0 & \frac{7}{5}d_0 & \frac{7}{6}d_1 & 2 & \frac{7}{6}d_1 & \frac{7}{5}d_0 \\
0 & 0 & 0 & \frac{7}{5}d_0 & \frac{7}{6}d_1 & 2 & \frac{7}{6}d_1 \\
0 & 0 & 0 & 0 & \frac{7}{5}d_0 & \frac{7}{6}d_1 & 2
\end{array}\right] \succ 0\}.
\]
The boundary of this set
is piecewise polynomial, defined by two algebraic plane
curves whose irreducible defining polynomials $-7200 + 5040d_0 + 3528d_0^2
+ 4900d_1^2 - 5145d_0d_1^2$ (a cubic) and $6480000 + 4536000d_0 - 9525600d_0^2
- 8820000d_1^2 - 4445280d_0^3 + 7717500d_0d_1^2 + 3111696d_0^4 
- 4321800d_0^2d_1^2 + 1500625d_1^4$ (a quartic)
factor the determinant of the 7-by-7 pencil $P^{c,d}_7$.
See Figure \ref{lmi7closure} for a representation of
this set and the algebraic components of its boundary.

\begin{figure}[h!]
\begin{center}
\includegraphics[scale=0.8]{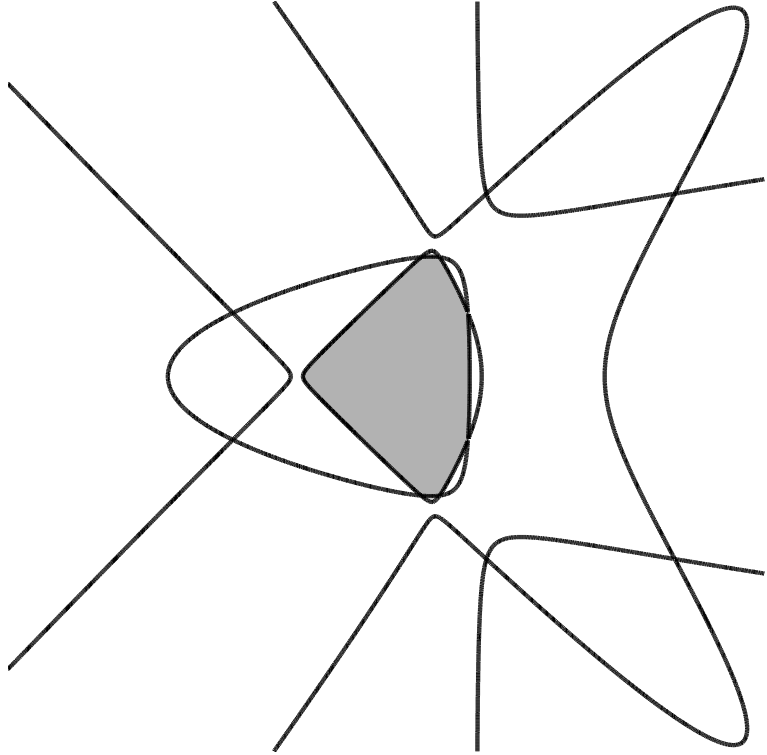}
\caption{7-by-7 LMI set (shaded gray)
and the algebraic components of its boundary (thick black lines).\label{lmi7closure}}
\end{center}
\end{figure}

On the right of Figure \ref{lmi750}
we represent the LMI set ${\mathcal P}^{c,d}_{50}$
which, according to Theorem \ref{limit},
is almost equal to the lifted LMI set
\[
{\mathcal P}^{c,d} = 
\{(d_0,d_1) \: :\:\exists\:(q_0,q_1,q_2) \: :\:
\left[\begin{array}{ccc}
q_0 & q_1 & d_0 \\
q_1 & q_2-q_0 & d_1-q_1 \\
d_0 & d_1-q_1 & 2-q_2
\end{array}\right] \succ 0\},
\]
the projection onto ${\mathbb R}^2$ of an LMI living
in ${\mathbb R}^5$, and which is the union of an ellipse and a triangle,
as studied in \cite{hsk03}.

\subsection{Third order}

\begin{figure}[h!]
\begin{center}
\includegraphics[scale=0.6]{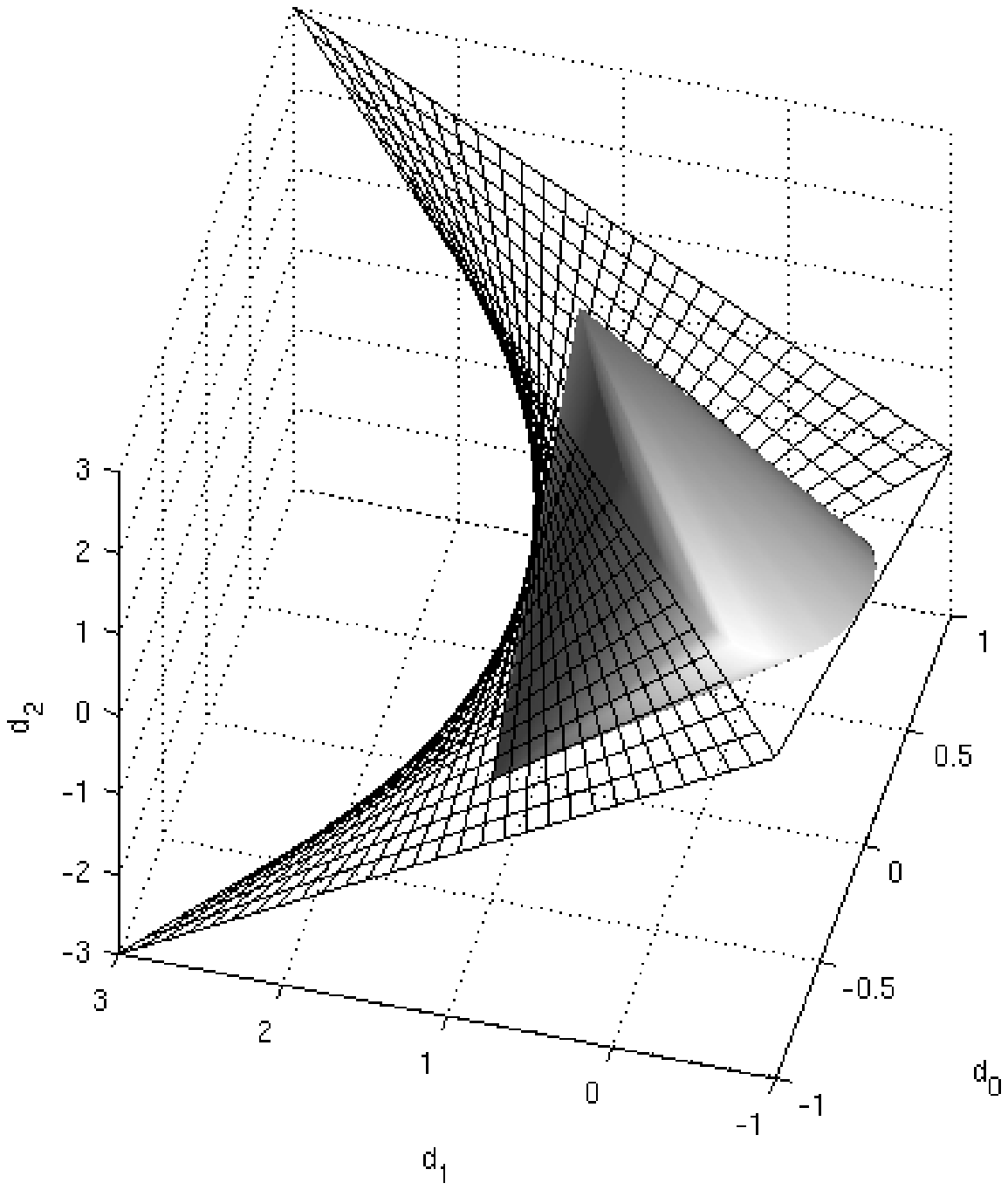}
\includegraphics[scale=0.6]{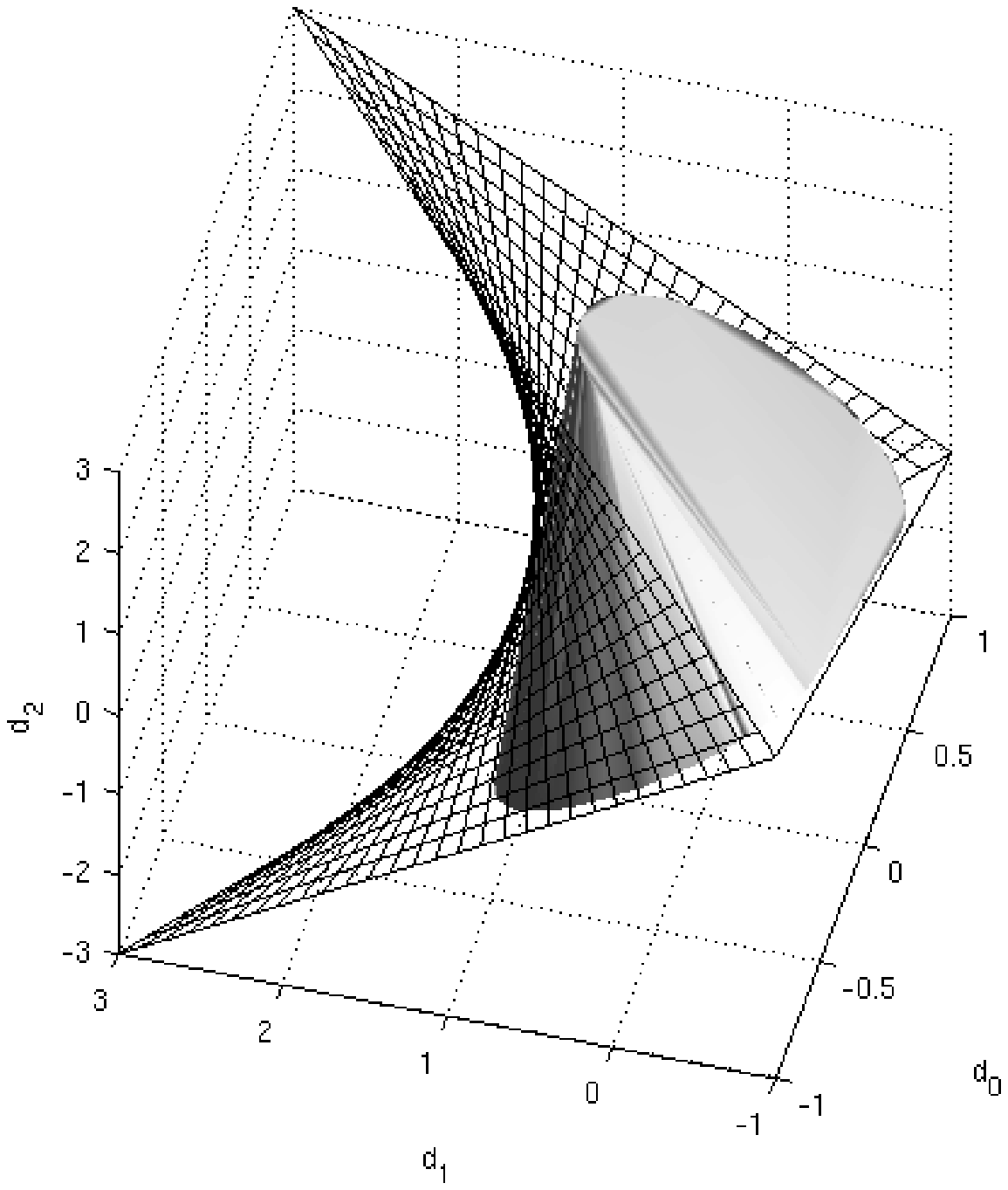}
\caption{4-by-4 LMI set (shaded gray, left)
and 50-by-50 LMI set (shaded gray, right) within third-order discrete-time
stability region (delimited by a meshed hyperbolic parabolic embedded in a
tetrahedron).\label{spacelmi}}
\end{center}
\end{figure}

We consider third-order polynomials for which the exact stability region
is delimited by a nonconvex hyperbolic parabolic embedded in a tetrahedron
with vertices $(z+1)^3$, $(z+1)^2(z-1)$, $(z+1)(z-1)^2$ and $(z-1)^3$,
see \cite[Example 11.14]{ackermann}.

Choosing $c(z)=z^3$, we have $p^{c,d}(\theta)=2+2d_2\cos\theta+2d_1\cos2\theta+2d_0\cos3\theta$.
The first LMI inner approximation is
\[
{\mathcal P}^{c,d}_4 = \{(d_0,d_1,d_2) \: :\: P^{c,d}_4 = \left[\begin{array}{cccc}
2 & \frac{4}{3}d_2 & 2d_1 & 4d_0 \\
\frac{4}{3}d_2 & 2 & \frac{4}{3}d_2 & 2d_1 \\
2d_1 & \frac{4}{3}d_2 & 2 & \frac{4}{3}d_2 \\
4d_0 & 2d_1 & \frac{4}{3}d_2 & 2
\end{array}\right] \succ 0\}
\]
and it is represented on the left of Figure \ref{spacelmi} within the nonconvex stability
region, as claimed by Theorem \ref{inner}. On the right of Figure \ref{spacelmi}
is represented the LMI set ${\mathcal P}^{c,d}_{50}$
which, according to Theorem \ref{limit}, is almost equal to the lifted LMI set
${\mathcal P}^{c,d}$.

\section{Concluding Remarks}

We have used results on spectra of Toeplitz matrices to construct a hierarchy
of convex inner approximations of the nonconvex set of stable polynomials,
with potential applications in fixed-order robust controller design.
The main difference with respect to previous results is that the
inner sets are defined by LMIs (affine sections of the
cone of positive definite matrices) without the need to resort
to projections and lifting variables. Moreover, our LMI sets
belong to a hierarchy converging asymptotically to a lifted LMI
inner approximation described previously in \cite{hsk03}.

It is likely that our results can be extended to deal with
positive trigonometric polynomial matrices and block Toeplitz matrices,
with potential applications in multi-input multi-output control systems.

Sufficient conditions ensuring that a real polynomial is a sum-of-squares
(and hence that it is positive) have been proposed in \cite{lasserre},
so it could be insightful to transpose these conditions to trigonometric
polynomials and compare with our approach. Results in \cite{lasserre}
are also valid for multivariate polynomials, and this may have applications
in fixed-order or robust controller design for multi-dimensional systems.

\section*{Acknowledgements}

The first author is supported by a Ram\'on y Cajal grant from the
Spanish government. The second author acknowledges support by 
project No.~103/10/0628 of the Grant Agency of the Czech Republic.

\end{document}